\newtheorem{theorem}{Theorem} 
\theoremstyle{definition}
\newtheorem{definition}{Definition}
\theoremstyle{remark}
\author{Dashiell E.A.\,Fryer}
\title{The Kullback-Leibler Divergence as a Lyapunov Function for Incentive Based Game Dynamics}
\date{\today}
\begin{document}
\maketitle

\begin{abstract}
It has been shown that the Kullback-Leibler divergence is a Lyapunov function for the replicator equations at evolutionary stable states, or ESS. In this paper we extend the result to a more general class of game dynamics. As a result, sufficient conditions can be given for the asymptotic stability of rest points for the entire class of incentive dynamics. The previous known results will be can be shown as corollaries to the main theorem. 
\end{abstract}

\section{Information Theory and The Replicator Dynamics}

Information theory was originally developed by Claude Shannon and Warren Weaver~\cite{shannon2001mathematical,shannon1949mathematical} as a mathematical framework to describe problems in communication including, but not limited to, data compression and storage. They introduced measures of information called entropy\footnote{In fact, the Shannon entropy is simply the Boltzmann entropy~\cite{jaynes1965gibbs} without the constants}. Shannon's entropy, denoted $H(P)$, is a measure of the average uncertainty in a random variable, $P$. It can be interpreted as the average number of bits needed to encode a message drawn i.i.d. from P. Maximizing the entropy can be used to give a lower bound on this average number of bits needed for encryption.

For our purposes, the concepts of cross entropy and relative entropy will be of great use. The Kullback-Leibler divergence (KL divergence or $D_{KL}$)~\cite{kullback1951information}, or relative entropy is a measure of information gain (loss) from one state to another. More precisely, it is an average measure of the additional bits needed to store $y$ given a code optimized to store $x$. It is defined as \begin{align*} D_{KL}(x||y) & = \sum_\alpha x_\alpha \ln \frac{x_\alpha}{y_\alpha} \\ & = \sum_\alpha x_\alpha\ln x_\alpha - \sum_\alpha x_\alpha\ln y_\alpha \\ & = H(x) - H(x,y). \end{align*} where $H(x,y)$ is the cross entropy of $x$ and $y$. It should be clear that minimizing $D_{KL}$ with respect to $y$ is equivalent to minimizing the cross entropy term as well. Intuitively, this is trying to find the best distribution to approximate the `true' distribution $x$ and is well known as the Principle of Minimum Discrimination or Minimum Discrimination Information.

Recall the definition of an evolutionary stable state or ESS~\cite{smith1974theory}. \begin{definition} A strategy profile $\hat{x}\in\Delta$ is an ESS if and only if $u(\hat{x},x) > u(x,x)$ for every $x\neq \hat{x}$ in a neighborhood of $\hat{x}$.
\end{definition}
In this context there is a single population playing a symmetric game against itself. It has been shown by Weibull \cite{weibull1997evolutionary}, and Harper \cite{harper2011escort} that the KL divergence is a Lyapunov function for the replicator equation at an ESS\footnote{This result continues to be true for $n$-population games.}. Further connection between evolutionary games and information theory can be realized by expanding the KL divergence in a Taylor series along $x=y$ and noting that the Hessian term is positive definite and is thus a metric. The derived metric, a localization of the global divergence, is called the Shahshahani metric~\cite{shahshahani1979new} and it has been shown that the replicator dynamics are gradient flows of this metric~\cite{hofbauer1998evolutionary}. 

\section{Incentive Stable States}

In \cite{fryer2012existence}, the notion of an incentive for a game was developed. Furthermore, a family of game dynamics was derived from these incentives and shown to be fully general in the following sense: any valid game dynamic can be achieved from an appropriate choice of incentive. To recap: an incentive, $\varphi(x)$, is valid if and only if, for every $i$ and $\alpha$, $x_{i\alpha} = 0 \Rightarrow \varphi_{i\alpha}(x) \geq 0$ and $\sum_\alpha \varphi_{i\alpha}(x) \neq -1$. The corresponding incentive dynamic is then given by \[\dot{x}_{i\alpha} = \varphi_{i\alpha}(x)-x_{i\alpha}\sum_\beta \varphi_{i\beta}(x).\]

The deep connections between information theory and the replicator dynamics lead us to believe that some of these properties are more general. Unfortunately, most of our incentive dynamics are not gradient flows of some Riemannian metric, but the Principle of Minimum Discrimination is compelling enough for us to believe we may be able to describe asymptotically stable states for the incentive dynamics. We begin by defining a notion of incentive stability that is closely related to the notion of ESS. 
\begin{definition} A strategy profile $\hat{x}$ is an incentive stable state or ISS if and only if \[x_i \cdot \frac{\varphi_i(x)}{x_i} < \hat{x}_i \cdot \frac{\varphi_i(x)}{x_i},\quad\forall i\] for $x\neq\hat{x}$ in a neighborhood of $\hat{x}$.
\end{definition}
The interpretation is exactly the same as in the ESS case: $\hat{x}$ is preferred to all distributions sufficiently close.

We can now show that all ISS are asymptotically stable for the corresponding incentive dynamics. Note: if there is only one agent we have a necessary and sufficient condition for the Kullback-Liebler divergence to be a strict Lyapunov function.

\begin{theorem}
If the state $\hat{x}$ is an interior incentive stable state for the corresponding incentive dynamics, then $\sum_i D_{KL}(\hat{x_i}||x_i)$ is a local Lyapunov function.
\end{theorem}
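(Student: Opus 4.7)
The plan is to take the time derivative of $V(x) = \sum_i D_{KL}(\hat{x}_i \| x_i)$ along a trajectory of the incentive dynamic and match the result directly to the ISS inequality. Because $\hat{x}$ is interior, every $\hat{x}_{i\alpha} > 0$, so in a small enough neighborhood $U$ of $\hat{x}$ we also have $x_{i\alpha} > 0$ for all $i,\alpha$; on $U$ the function $V$ is smooth, finite, satisfies $V(\hat{x}) = 0$, and is strictly positive elsewhere by Gibbs' inequality. So the only nontrivial content is the sign of $\dot V$.

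Next I would compute $\dot V$ term by term. Since $\hat{x}_i$ is fixed,
\[
\frac{d}{dt} D_{KL}(\hat{x}_i\|x_i) \;=\; -\sum_\alpha \frac{\hat{x}_{i\alpha}}{x_{i\alpha}}\,\dot x_{i\alpha}.
\]
Substituting the incentive dynamic $\dot x_{i\alpha} = \varphi_{i\alpha}(x) - x_{i\alpha}\sum_\beta \varphi_{i\beta}(x)$ gives
\[
\frac{d}{dt} D_{KL}(\hat{x}_i\|x_i) \;=\; -\sum_\alpha \hat{x}_{i\alpha}\,\frac{\varphi_{i\alpha}(x)}{x_{i\alpha}} \;+\; \Big(\sum_\alpha \hat{x}_{i\alpha}\Big)\sum_\beta \varphi_{i\beta}(x).
\]
Using $\sum_\alpha \hat{x}_{i\alpha}=1$ and rewriting $\sum_\beta \varphi_{i\beta}(x) = \sum_\beta x_{i\beta}\,\varphi_{i\beta}(x)/x_{i\beta} = x_i\cdot \varphi_i(x)/x_i$, this collapses to
\[
\frac{d}{dt} D_{KL}(\hat{x}_i\|x_i) \;=\; x_i\cdot\frac{\varphi_i(x)}{x_i} \;-\; \hat{x}_i\cdot\frac{\varphi_i(x)}{x_i}.
\]
By the ISS hypothesis, each such difference is strictly negative for $x\ne \hat{x}$ in $U$, so summing over $i$ yields $\dot V(x) < 0$ on $U\setminus\{\hat{x}\}$, establishing $V$ as a strict local Lyapunov function.

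Structurally this is the same calculation Weibull and Harper use for the replicator equation; the key observation is that the normalization term in the incentive dynamic supplies exactly the quantity $x_i\cdot \varphi_i(x)/x_i$ that appears on the right side of the ISS inequality, so the ISS condition is essentially designed to make $\dot V$ come out with the right sign. I do not anticipate a real obstacle beyond bookkeeping: the only issue that requires care is ensuring that the neighborhood $U$ on which we work is small enough that (i) the trajectory stays in the open simplex so all logarithms and reciprocals make sense, and (ii) the strict ISS inequality holds uniformly. Both follow from $\hat{x}$ being interior and from continuity of $\varphi$ on the simplex, given the validity conditions $\sum_\alpha \varphi_{i\alpha}(x) \ne -1$.
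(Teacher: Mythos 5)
Your proposal is correct and follows essentially the same route as the paper: differentiate $V(x)=\sum_i D_{KL}(\hat{x}_i\|x_i)$, substitute the incentive dynamic, use $\sum_\alpha \hat{x}_{i\alpha}=1$ to collapse the normalization term into $x_i\cdot\varphi_i(x)/x_i$, and invoke the ISS inequality to get $\dot V<0$. Your added remarks on positive definiteness via Gibbs' inequality and on choosing the neighborhood so trajectories stay interior are points the paper leaves implicit, but they do not change the argument.
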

\begin{proof}
Define $V_i(x)=D_{KL}(\hat{x_i}||x_i)$ and $V(x) = \sum_i V_i(x)$. Then we have the following:
\begin{align*}
\dot{V_i}(x) & = -\sum_\alpha \hat{x}_{i\alpha}\frac{\dot{x}_{i\alpha}}{x_{i\alpha}} \\
			& = - \sum_\alpha \frac{\hat{x}_{i\alpha}}{x_{i\alpha}}\left[\varphi_{i\alpha}(x)-x_{i\alpha}\sum_\beta \varphi_{i\beta}(x)\right] \\
			& = \sum_\beta \varphi_{i\beta}(x) \sum_\alpha \hat{x}_{i\alpha} - \sum_\alpha \frac{\hat{x}_{i\alpha}}{x_{i\alpha}}\varphi_{i\alpha}(x) \\
			& = \sum_\alpha \frac{x_{i\alpha}-\hat{x}_{i\alpha}}{x_{i\alpha}}\varphi_{i\alpha}(x) < 0\\
			& \Leftrightarrow x_i \cdot \frac{\varphi_i(x)}{x_i} < \hat{x}_i \cdot \frac{\varphi_i(x)}{x_i}
\end{align*} 
\end{proof}

\section{Examples}

The replicator dynamics can be achieved using the incentive \[\varphi_{i\alpha}(x) = x_{i\alpha}(f_{i\alpha}(x) + g_i(x)),\] where $f$ is the fitness landscape, and $g$ is an arbitrary function. Thus the ISS condition trivially reduces to the familiar ESS condition.

There are however, many other dynamics that are asymptotically stable at ESS. For example, Nagurney and Zhang \cite{nagurney1996projected}, Lahkar and Sandholm\cite{lahkar2008projection}, and Harper \cite{harper2011escort} all show independently that the euclidean distance can be used as a Lyapunov function to establish this fact for the projection dynamics. 

\subsection{Best Reply}

The Best Reply dynamic as defined by Young \cite{young2001individual} has a simple incentive function, $\varphi_{i\alpha}(x) = BR_{i\alpha}(x)$, where $BR_{i\alpha}(x)=1$ if $e_{i\alpha}$ is a best reply to the current state $x$. A tiebreaker is assumed for instances where there is more than one best reply. Thus the incentive dynamic is \[ \dot{x}_{i\alpha} = BR_{i\alpha}(x) - x_{i\alpha}.\]

The ISS condition is rather simple to interpret:
\begin{align*}
x_{i} \cdot \frac{ \varphi_i(x) }{x_i} & < \hat{x}_i \cdot \frac{ \varphi_i(x) }{ x_i } \\
\sum_\alpha BR_{i\alpha}(x) & < \sum_{\alpha} \frac{ \hat{x}_{i\alpha} BR_{i\alpha}(x) }{ x_{i\alpha} } \\
1 & < \frac{\hat{x}_{i\beta}}{x_{i\beta}}, \text{where $e_{i\beta}$ is the best reply to $x$} \\
\Rightarrow x_{i\beta} & < \hat{x}_{i\beta}
\end{align*}

This occurs trivially at an ESS.

\bibliographystyle{amsalpha}
\bibliography{refs}
\end{document}